\theoremstyle{plain}
\newtheorem{theorem}{Theorem}[section]
\newtheorem{corollary}[theorem]{Corollary}
\theoremstyle{remark}
\newtheorem{remark}[theorem]{Remark}
\DeclareMathOperator{\re}{Re}
\title[On an integral identity]{On an integral identity}
\author[A.~Bostan]{Alin Bostan}
\address[Alin Bostan]{Inria and Universit\'e Paris-Saclay, 1 rue Honor\'e d'Estienne d'Orves, 91120 Palaiseau, France}
\email{alin.bostan@inria.fr}
\author[F.~Chamizo]{Fernando Chamizo}
\address[Fernando Chamizo]{Universidad Aut\'onoma de Madrid and ICMAT, Ciudad universitaria de Cantoblanco, 28049 Madrid, Spain}
\email{fernando.chamizo@uam.es}
\author[M. P.-Sundqvist]{Mikael P. Sundqvist}
\address[Mikael Persson Sundqvist]{Lund University, Department of Mathematical 
Sciences, Box 118, 221\ 00 Lund, Sweden.}
\email{mikael.persson\_sundqvist@math.lth.se}
\begin{document}

\makeatletter
\def\author@andify{%
  \nxandlist {\unskip ,\penalty-1 \space\ignorespaces}%
    {\unskip {} \@@and~}%
    {\unskip \penalty-2 \space \@@and~}%
}
\makeatother

\begin{abstract} 

We give three elementary proofs of a nice equality of definite integrals,
which arises from the theory of bivariate hypergeometric functions, and
has connections with irrationality proofs in number theory. We furthermore
provide a generalization together with an equally elementary proof and discuss
some consequences.

\end{abstract}

\maketitle

\section*{Introduction}\label{sec:intro}

The following infinite family of equalities between definite integrals was
proven by S. B. Ekhad, D. Zeilberger and W.~Zudilin in~\cite{EkZeZu}, using
the Almkvist--Zeilberger \emph{creative telescoping algorithm}~\cite{AlZe90}
for symbolic integration:
\begin{equation}\label{eq:EZZ}
\int_{0}^{1}\!{\frac {{x}^{n} \left( 1-x \right) ^{n}}{ \left(  \left( x+a 
\right)  \left( x+b \right)  \right) ^{n+1}}}\,{\mathrm d}x
=
\int_{0}^{1}\!{\frac {{x}^{n} \left( 1-x \right) ^{n}}{ \left(  \left( a-b 
\right) x+ \left( a+1 \right) b \right) ^{n+1}}}\,{\mathrm d}x,
\end{equation}
for any reals $a > b > 0$ and any nonnegative integer $n$.

As pointed out in~\cite{EkZeZu}, these integrals \enquote{are not taken from a pool
of no-one-cares analytic creatures}: they are related to rational
approximations to some logarithmic values \cite{AlRo80} and a trained eye
could recognize in identity~\eqref{eq:EZZ} a particular case of a known
relation for Appell's bivariate hypergeometric function 
(see \S\ref{sec:consequences} below).

The proof provided in~\cite{EkZeZu} is, without any doubt, elementary. It
requires some clever (and at first sight, magic) auxiliary rational functions
coming from the silicon guts of the first author of~\cite{EkZeZu}. Of course,
this is not objectable at all, and we are ourselves convinced that
computer-assisted proofs are an increasingly important trend in mathematics.
At the same time we think that, \emph{pour l'honneur de l'esprit humain}, it
is of some interest to offer a proof that a freshperson could not only follow
but also create. 

In Sections~\ref{sec:first}--\ref{sec:third} of this note we provide three 
elementary proofs of the identity~\eqref{eq:EZZ}. 
We also generalize the identity. 
In Section~\ref{sec:consequences} we discuss some direct
consequences, emphasizing the relation of the identities 
with known identities for hypergeometric functions. We also give a couple of
combinatorical identities, and finally conclude that the
Legendre polynomials are eigenfunctions of a certain differential operator.

\section{First proof: using a rational change of variables}\label{sec:first}

Note that the natural range to assure the convergence of the integrals is
$a,b>0$ and $n\in\mathbb{R}_{>-1}$. Let us demonstrate firstly that an utterly
simple change of variables proves the equality in this extended range and in
fact an even more general equality.

\begin{proof}[{Proof of \eqref{eq:EZZ} for $a,b>0$ and $n\in\mathbb{R}_{>-1}$}]
With the rational change of variables
\[
x=\frac{b(1-u)}{b+u},
\quad\text{one gets}\quad
{\mathrm d}x=-\frac{b(1+b)}{(b+u)^2}\,{\mathrm d}u.
\]
Note that it is a promising change because it takes $0$, $1$, $b(a+1)/(b-a)$ and $\infty$, which are the singularities of the second integrand, into $1$, $0$, $-a$ and $-b$, respectively, which are singularities of the first integrand.
The interval $[0,1]$ is preserved. Thus, a direct calculation yields
\[
\begin{aligned}
\int_0^1\frac{x^n(1-x)^n}{\bigl((x+a)(x+b)\bigr)^{n+1}}\,{\mathrm d}x
&=\int_0^1\frac{\Bigl(\frac{b(1-u)}{b+u}\Bigr)^n\Bigl(1-\frac{b(1-u)}{b+u}\Bigr)^n}{\Bigl[\Bigl(\frac{b(1-u)}{b+u}+a\Bigr)\Bigl(\frac{b(1-u)}{b+u}+b\Bigr)\Bigr]^{n+1}}\frac{b(1+b)}{(b+u)^2}\,{\mathrm d}u\\
&=
\int_0^1\frac{(1-u)^nu^n b^{n+1}(1+b)^{n+1}\,{\mathrm d}u}{\bigl[\bigl(b(1-u)+a(b+u)\bigr)\bigl(b(1-u)+b(b+u)\bigr)\bigr]^{n+1}}\\
&=
\int_0^1\frac{u^n(1-u)^n}{\bigl((a-b)u+(a+1)b\bigr)^{n+1}}\,{\mathrm d}u,
\end{aligned}
\]
which proves~\eqref{eq:EZZ}.
\end{proof}

Actually, the same change of variables also proves a generalization of~\eqref{eq:EZZ}:
\begin{theorem}\label{th:main}
If $a$, $b>0$, $k$, $n\in\mathbb{R}$ and $s$, $\ell \in\mathbb{R}_{>-1}$, then
\begin{equation}\label{eq:4EZZ}
\int_0^1 \frac{x^{\ell} (1-x)^{s}}{\left(x+a\right)^{k+1} \, \left(x+b \right)^{n+1}} \, {\mathrm d}x
=
\frac{(b+1)^{s-n}}{b^{n-\ell}}
\int_0^1 \frac{x^{s} \, (1-x)^{\ell} \, (x+b)^{n+k -\ell-s}}{\left((a-b) \, x + (a+1)b \right)^{k+1}} \, {\mathrm d}x.
\end{equation}
\end{theorem}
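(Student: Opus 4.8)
The plan is to apply exactly the same rational change of variables $x = b(1-u)/(b+u)$ that proved \eqref{eq:EZZ}, since the generalized identity differs from the original only in the exponents attached to each linear factor, and this substitution acts cleanly on each factor separately. The strategy is therefore purely computational: transform the left-hand integrand factor by factor, collect the numerical constants into the prefactor, and read off the surviving $u$-integral.

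First I would record how the substitution transforms each building block of the integrand. A direct computation gives
\[
1-x=\frac{(1+b)u}{b+u},\qquad x+a=\frac{(a-b)u+(a+1)b}{b+u},\qquad x+b=\frac{b(1+b)}{b+u},
\]
together with $x=b(1-u)/(b+u)$ and ${\mathrm d}x=-b(1+b)(b+u)^{-2}\,{\mathrm d}u$. As already noted for the first proof, the endpoints are swapped ($x=0\leftrightarrow u=1$ and $x=1\leftrightarrow u=0$), so the minus sign from ${\mathrm d}x$ is absorbed by reversing the limits and the interval $[0,1]$ is preserved.

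Next I would substitute these expressions into the left-hand integrand. Each of the factors $x^{\ell}$, $(1-x)^{s}$, $(x+a)^{-(k+1)}$, $(x+b)^{-(n+1)}$, and the Jacobian contributes a power of $(b+u)$ as well as numerical factors that are pure powers of $b$ and of $(1+b)$. Collecting the numerical factors should give $b^{\ell-n}(1+b)^{s-n}$, which is precisely the prefactor $(b+1)^{s-n}/b^{n-\ell}$ on the right-hand side, and the remaining $u$-dependent part should match the right-hand integrand with $u$ in place of $x$.

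The only real obstacle is careful bookkeeping of the exponent of $(b+u)$: it arises with exponent $-\ell-s$ from the two numerator factors, with exponent $k+n+2$ from inverting the two denominator factors, and with exponent $-2$ from the Jacobian, for a net exponent $k+n-\ell-s$. This is exactly the power $(x+b)^{n+k-\ell-s}$ appearing in the right-hand integrand, and tracking it correctly is what explains the presence of that otherwise mysterious factor in the statement. Finally, the hypotheses $s,\ell>-1$ guarantee integrability at the two endpoints while $a,b>0$ keep both integrands free of singularities on $[0,1]$, so both sides converge and the identity \eqref{eq:4EZZ} follows.
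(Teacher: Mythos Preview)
Your proposal is correct and follows exactly the approach the paper takes: the text states that ``the same change of variables also proves a generalization of~\eqref{eq:EZZ}'', and your factor-by-factor bookkeeping of the powers of $b$, $1+b$, and $b+u$ is precisely what that sentence summarizes. The exponent accounting you give (in particular the net power $k+n-\ell-s$ of $b+u$) is accurate, so nothing is missing.
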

We will come back to and draw some conclusions from this more general identity in Section~\ref{sec:consequences}.


\section{Second proof: using indefinite integration}\label{sec:second}

We still consider the extended convergence range $a,b>0$ but now $n$ is a nonnegative integer. The following proof is based on the generating functions \cite{wilf} of the sequence of integrals when $n$ varies. 

\begin{proof}[{Proof of \eqref{eq:EZZ} for $a,b>0$ and $n\in\mathbb{Z}_{\ge 0}$}]
 Let $I_1$ and $I_2$ be the generating functions of each  side of~\eqref{eq:EZZ} i.e., multiplying by $t^n$ and summing from $n=0$ to $\infty$. They clearly converge uniformly for small values of $t$ and we have $I_j(t)=\int_0^1\,{\mathrm d}x/P_j(x,t)$ where
 \begin{equation}\label{p1p2}
  P_1(x,t)=(x+a)(x+b)-tx(1-x)
  \quad\text{and}\quad
  P_2(x,t)=(a-b)x+(a+1)b-tx(1-x).
 \end{equation}
 If $P(x)=Ax^2+Bx+C$ has no zero in $[0,1]$, and $\Delta=B^2-4AC>0$, then, using standard integration techniques,
 \[
  \int_0^1
  \frac{{\mathrm d}x}{P(x)}
  =
  \frac{1}{r}
  \log\Big(
  \frac{B+2C+r}{B+2C-r}
  \Big)
  \qquad\text{with}\quad
  r=\sqrt{\Delta}.
 \]
 Note that for $t$ small enough $P_1$ and $P_2$ fulfill these conditions. A calculation shows that both polynomials have the same discriminant $\Delta$ and the same values of $B+2C$. Hence $I_1(t)=I_2(t)$ in some interval containing the origin and then the integrals in~\eqref{eq:EZZ}, which are their Taylor coefficients, are equal. 
\end{proof}

\section{Third proof: creative telescoping}\label{sec:third}
We cannot resist the temptation to offer a third proof, in the
spirit of the one in~\cite{EkZeZu}, but based on a different kind of
 \enquote{creative telescoping}. The starting point is the same as in Section~\ref{sec:second}, namely that the family of identities~\eqref{eq:EZZ} is equivalent to the fact
that the two integrals $I_1(t)$ and~$I_2(t)$ between $x=0$ and $x=1$ of the 
rational functions $F_1=1/P_1$ and $F_2=1/P_2$, with $P_j$ as in \eqref{p1p2},
are equal.

Creative telescoping (this time in the classical \enquote{differential-differential} setting) shows that $F_1$ and $F_2$ satisfy the equalities
\[
 \left( t-2\,ab-a-b \right) F_j+ \left( {t}^{2}-2\,t \left( 2\,ab+a+b \right) + \left( a-b \right) ^{2} \right) {\frac {\partial F_j}{\partial t}}
+ 
{\frac {\partial}{\partial x}} \left( F_j  R_j \right) = 0
\]
where $R_1(t,x)$ and $R_2(t,x)$ are the rational functions
\[
R_1(t,x) =  \bigl(( a+b+t+2) x+2ab+a+b-t \bigr) x
\]
and
\[
R_2(t,x) = {\frac { \bigl( ( 2ab+a+b ) t- (a-b)^{2} \bigr)x^2+b ( a+1)(2ab+a+b-t)}{t+b-a}}.
\]
Hence, by integration between $x=0$ and $x=1$, one obtains that both $I_1$ and $I_2$ are 
solutions of the differential equation
\[
(t-2ab-a-b) I(t) + \bigl( t^2-2t ( 2ab+a+b) + (a-b)^2 \bigr)I'(t) +2 =0
\]
Therefore $I_1=I_2$ by Cauchy's theorem, since $I_1 - I_2$  is the solution of a differential
equation of order $1$ with leading term non-vanishing at $t=0$, and its
evaluation at $t=0$ is zero, as $I_1(0) = I_2(0) = {\frac {1}{a-b}\ln  \left( {\frac {a \left( b+1 \right) }{ \left( a+1 \right) b}} \right) }
$. \hfill $\square$

\section{Some consequences}\label{sec:consequences}

\subsection{Appell's identity}

We will show that identity~\eqref{eq:4EZZ} from our Theorem~\ref{th:main}
contains, as a particular case, a classical hypergeometric
identity due to Appell.
The relevant definitions to state this identity are the classical Gauss
hypergeometric function $_2F _1(\alpha, \beta;\gamma;t)$ and the Appell
bivariate hypergeometric function $F_1(\alpha;\beta,\beta';\gamma;x,y)$,
given respectively by
\[
\sum_{n=0}^\infty \frac{(\alpha)_n(\beta)_n}{(\gamma)_n} \, \frac
{t^n} {n!}
\qquad\text{and}\qquad
\sum_{m,n\geq 0} 
\frac{(\alpha)_{m+n} \, (\beta)_{m} \, (\beta')_{n}}{(\gamma)_{m+n}}
\, \frac{x^m y^n}{m! n!},
\]
where $(a)_n$ denotes the rising factorial $a(a+1)\cdots(a+n-1)$ for 
$n\in\mathbb{N}$ and it is assumed $|t|,|x|,|y|<1$ to assure the convergence.

It is very classical that these hypergeometric functions admit the following integral
representations, which hold as soon as $\beta, \beta'>0$ and $\gamma :=
\beta+\beta' > \alpha >0$:
\begin{equation}\label{eq:Euler}
_2F _1 \left(\alpha, \beta; \gamma; z\right)
=
\frac{\Gamma(\gamma)}{\Gamma(\beta) \Gamma(\beta')} 
\int_0^1 \frac{t^{\beta-1} (1-t)^{\beta'-1}} {\left(1- t z \right)^{\alpha}}\, {\mathrm d}t  
\end{equation}
and
\begin{equation}\label{eq:Picard}
 F_1(\alpha;\beta,\beta';\gamma;x,y) = 
\frac{\Gamma(\gamma)}{\Gamma(\alpha) \Gamma(\gamma-\alpha)} 
\int_0^1  \frac{ t^{\alpha-1} (1-t)^{\gamma-\alpha-1} }
 { \left(1- t x \right)^{\beta} \,
 \left(1- t y \right)^{\beta'}}\, {\mathrm d}t. 
\end{equation}
Equation~\eqref{eq:Euler} is due to Euler~\cite[Th.~2.2.1]{AnAsRo99}, and~\eqref{eq:Picard} to
Picard~\cite{Picard1881}, see also~\cite[Eq.~(9)]{Appell1925}.

With these notations, 
we are able to state the following hypergeometric function identity,
which appears
on page 8 of Appell's classical memoir~\cite{Appell1925}:
\begin{corollary}[Appell's identity]\label{c1}
If $\alpha$, $\beta$, $\beta'>0$, $\beta+\beta'>\alpha$, $|x|<1$ and $|y|<1$ then
\begin{equation}\label{eq:Appell}
  _2F _1 \left(\alpha, \beta;\beta+\beta';\frac{y-x}{y-1}\right) 
 =
 (1-y)^{\alpha} \, 
 F_1(\alpha;\beta,\beta';\beta+\beta';x,y).
\end{equation}
Equivalently, in terms of integrals:
\begin{equation}\label{eq:gen}
\int_0^1 \frac{t^{\beta-1} \, (1-t)^{\beta'-1}}{\left((y-x) \, t + 1-y \right)^{\alpha}} \, {\mathrm d}t
=
\frac{\Gamma(\beta) \Gamma(\beta')}{\Gamma(\alpha) \Gamma(\beta+\beta'-\alpha)}
\int_0^1 \frac{t^{\alpha-1} (1-t)^{\beta+\beta'-\alpha-1}}{\left(1- t x \right)^{\beta} \, \left(1- t y \right)^{\beta'}} \, {\mathrm d}t.  
\end{equation}
\end{corollary}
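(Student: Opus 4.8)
The plan is to prove the equivalent integral identity~\eqref{eq:gen} and then read off the hypergeometric form~\eqref{eq:Appell}, deducing it from Theorem~\ref{th:main}. First I would dispose of the equivalence \enquote{\eqref{eq:Appell} $\Leftrightarrow$ \eqref{eq:gen}} by inserting the integral representations: apply Euler's formula~\eqref{eq:Euler} to the Gauss function on the left of~\eqref{eq:Appell}, with argument $z=\frac{y-x}{y-1}$, and Picard's formula~\eqref{eq:Picard} to the Appell function on the right, with $\gamma=\beta+\beta'$. The only computation needed is the rewriting $1-tz=\frac{(y-x)t+1-y}{y-1}$, which turns $(1-tz)^{-\alpha}$ into $(1-y)^{\alpha}\bigl((y-x)t+1-y\bigr)^{-\alpha}$, i.e.\ exactly the linear denominator of~\eqref{eq:gen} together with a factor $(1-y)^{\alpha}$. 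This factor cancels the $(1-y)^{\alpha}$ on the right of~\eqref{eq:Appell}, and the ratio of Gamma factors is precisely the constant in~\eqref{eq:gen}. Hence it suffices to prove~\eqref{eq:gen}.

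For~\eqref{eq:gen} I would specialize Theorem~\ref{th:main}, relabelling its integration variable as $t$ so as to free the letter $x$ for the parameter in~\eqref{eq:gen}. Taking $\ell=\alpha-1$, $s=\beta+\beta'-\alpha-1$, $k=\beta'-1$, $n=\beta-1$ and $a=-1/y$, $b=-1/x$, the exponent $n+k-\ell-s$ vanishes, so the factor $(t+b)^{n+k-\ell-s}$ in~\eqref{eq:4EZZ} disappears. With these choices $t+a=-\frac{1}{y}(1-ty)$ and $t+b=-\frac{1}{x}(1-tx)$, so the left-hand side of~\eqref{eq:4EZZ} becomes a constant multiple of the Picard integral on the right of~\eqref{eq:gen}; and the single linear factor on the right of~\eqref{eq:4EZZ} is $(a-b)t+(a+1)b=\frac{1}{xy}\bigl((y-x)t+1-y\bigr)$, so that side is a Gauss-type (single-factor) integral with argument $w=\frac{y-x}{y-1}$, exactly as in~\eqref{eq:Appell}. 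Since Theorem~\ref{th:main} forces $a,b>0$, this establishes the identity first for $x,y\in(-1,0)$, where all denominators stay positive on $[0,1]$; I would remove this restriction at the end by analytic continuation, both sides of~\eqref{eq:gen} being holomorphic in $x,y$ on the unit polydisc.

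The one genuine subtlety is that the Gauss integral produced on the right of~\eqref{eq:4EZZ} does \emph{not} carry the parameters $(\alpha,\beta;\beta+\beta')$ of~\eqref{eq:Appell}, but rather $(\beta',\beta+\beta'-\alpha;\beta+\beta')$: the swap $\ell\leftrightarrow s$ built into Theorem~\ref{th:main} shifts the numerator exponents. These two Gauss functions share the same argument $w$ and are related by Euler's classical transformation ${}_2F_1(p,q;r;z)=(1-z)^{\,r-p-q}\,{}_2F_1(r-p,r-q;r;z)$, which here contributes the factor $(1-w)^{\alpha-\beta'}=\bigl(\frac{1-x}{1-y}\bigr)^{\alpha-\beta'}$. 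I expect this reconciliation to be the main obstacle: one must invoke (or re-derive, via the Pfaff substitution $t\mapsto 1-t$ applied twice) this transformation, and then verify that the prefactor $\frac{(b+1)^{s-n}}{b^{n-\ell}}$ of~\eqref{eq:4EZZ}, the constants from the rewritings $t+a=-\frac{1}{y}(1-ty)$ and $t+b=-\frac{1}{x}(1-tx)$, and the Euler factor $(1-w)^{\alpha-\beta'}$ all collapse into the single clean constant of~\eqref{eq:gen} (equivalently, into the $(1-y)^{\alpha}$ of~\eqref{eq:Appell}). This final bookkeeping is routine but error-prone, and is where I would be most careful.
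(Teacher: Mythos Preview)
Your proposal is correct and follows the paper's overall strategy: specialize Theorem~\ref{th:main} with $n+k=\ell+s$, replace $a,b$ by $-1/x,-1/y$ in some order, identify the two sides with the Euler and Picard integrals, and finish by analytic continuation from $x,y<0$. The one difference lies precisely in the \enquote{subtlety} you flag. The paper avoids Euler's transformation altogether by making the opposite assignment $(k,a,b)=(\beta-1,-1/x,-1/y)$ and, crucially, applying the change of variables $t\mapsto 1-t$ to the single-factor integral before reading it as a Gauss integral. After this substitution the numerator exponents line up and the linear denominator becomes $(1-y)\bigl(1-\tfrac{y-x}{y-1}\,t\bigr)$ with the desired argument already in place, so the resulting ${}_2F_1$ has parameters $(\beta,\alpha;\beta+\beta')$ and only the trivial symmetry ${}_2F_1(\alpha,\beta;\gamma;z)={}_2F_1(\beta,\alpha;\gamma;z)$ is needed. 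Your detour via Euler's transformation is valid, but the paper's extra change of variables absorbs it and spares you exactly the bookkeeping you were bracing for.
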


\begin{remark}
Identity~\eqref{eq:EZZ} is a particular case of~\eqref{eq:gen}, with $\alpha=\beta=\beta'=n+1$.
\end{remark}

\begin{proof}[Proof of Corollary~\ref{c1}]
If we set $n=\ell+s-k$ in~\eqref{eq:4EZZ}, then the identity becomes
\begin{equation}\label{eq:5EZZ}
\int_0^1 \frac{x^{\ell} (1-x)^{s}}{\left(x+a\right)^{k+1} \, \left(x+b \right)^{\ell+s-k+1}} \, {\mathrm d}x
=
\frac{(b+1)^{k-\ell}}{b^{s-k}}
	\int_0^1 \frac{x^{s} \, (1-x)^{\ell}}{\left((a-b) \, x + (a+1)b \right)^{k+1}} \, {\mathrm d}x.
\end{equation}
Next we replace the integration variable $x$ with $1-t$ in the first integral 
and by $t$ in the second integral, then $a$ with $-1/x$ and $b$ with $-1/y$, 
to deduce the following equivalent form for $x,y<0$:
\[
\int_0^1 \frac{t^{\ell} \, (1-t)^{s}}{\bigl(1 - \frac{y-x}{y-1} \, t \bigr)^{k+1} } \, {\mathrm d}t
=
(1-y)^{\ell+1}
\int_0^1 \frac{t^{\ell} (1-t)^{s}}{(1-tx)^{k+1} \, (1-ty)^{\ell+s-k+1}} \, {\mathrm d}t,
\]
that can be analytically continued to the values of $x$ and $y$ as in the statement.
Inserting $\ell = \alpha - 1$, $k = \beta - 1$ and $s =  \beta + \beta' -\alpha -1$,
using the integral representations~\eqref{eq:Euler} and~\eqref{eq:Picard} 
and the obvious symmetry $_2F _1(\alpha, \beta;\gamma;t)= {_2F _1}(\beta, \alpha;\gamma;t)$, we deduce \eqref{eq:Appell}.
\end{proof}


The close relationship between \eqref{eq:5EZZ} and the univariate and bivariate 
hypergeometric functions is shown in the previous proof of Corollary~\ref{c1}. 
Taking into account that~\eqref{eq:gen} is a formulation of this result not 
involving any hypergeometric function, it appears as a natural problem to 
provide a more direct proof of \eqref{eq:gen}. We present an independent proof 
involving basic real and complex analysis.

\begin{proof}[Alternative proof of \eqref{eq:gen}]
We write $\gamma=\beta+\beta'$ as before. Changing $t\mapsto 1-t$ in the first integral of~\eqref{eq:gen} and multiplying the identity by 
 $\Gamma(\alpha)\Gamma(\gamma-\alpha)$, we want to prove that the functions
 \[
  G_1(x,y)
  =
  \Gamma(\alpha) \Gamma(\gamma-\alpha)
 \int_0^1 \frac{t^{\gamma-\beta-1}  (1-t)^{\beta-1}}{\left(  1-ty-(1-t)x \right)^{\alpha}} \, {\mathrm d}t
 \]
 and
 \[
  G_2(x,y)
  =
 \Gamma(\beta) \Gamma(\gamma-\beta)
 \int_0^1 \frac{t^{\alpha-1} (1-t)^{\gamma-\alpha-1}}{\left(1-tx  \right)^{\beta} \, \left(1- ty\right)^{\gamma-\beta}} \, {\mathrm d}t
 \]
 coincide. Both are analytic functions of $x$ and $y$ on $\re(x),\re(y)<1/2$ 
(for instance by Morera's theorem applied separately in $x$ and $y$). 
Then it is enough to prove
 \[
  \frac{\partial^{n+m} G_1}{\partial x^n\partial y^m}
  (0,0)
  =
  \frac{\partial^{n+m} G_2}{\partial x^n\partial y^m}
  (0,0)
  \qquad\text{for every }m,n\in\mathbb{Z}_{\ge 0},
 \]
 because in this case their Taylor coefficients coincide. 
 Noting that the $k$-th derivative of $(1-x)^{-\delta}$
 is $\frac{\Gamma(\delta+k)}{\Gamma(\delta)}(1-x)^{-\delta-k}$, this is the same as
 \[
   \Gamma(\gamma-\alpha)
  \Gamma(\alpha+m+n)
  B(\gamma+m-\beta,\beta+n)
 =
 \Gamma(\beta+n)
 \Gamma(\gamma+m-\beta)
 B(\alpha+m+n,\gamma-\alpha)
 \]
 with $B(p,q)=\int_0^1 t^{p-1}(1-t)^{q-1}\, {\mathrm d}t$. 
Using the well-known elementary evaluation 
$B(p,q)=\Gamma(p)\Gamma(q)/\Gamma(p+q)$, the proof is complete.
\end{proof}

\subsection{Combinatorial identities and Legendre polynomials}

The equality~\eqref{eq:EZZ} and its generalizations can be interpreted in
combinatorial terms. If we in~\eqref{eq:4EZZ} set $a=b>0$, 
$\ell=s=n$ and replace $k$ by $k-1$ then we get
\begin{equation}\label{eq:chv}
  \int_0^1
  \frac{x^n(1-x)^n}{(x+b)^{n+k+1}}
  \, {\mathrm d}x
  =
  \frac{1}{b^k(b+1)^k}
  \int_0^1
  \frac{x^n(1-x)^n}{(x+b)^{n-k+1}}
  \, {\mathrm d}x.
\end{equation}

\begin{corollary}\label{c2}
 The functions $f_n(x)=x^n(1+x)^n$ with $n$ a nonnegative integer satisfy
 \[
  f_k 
  f_n^{(n+k)}
  =
  \frac{(n+k)!}{(n-k)!}
  f_n^{(n-k)}
  \qquad\text{for $0\le k\le n$.}
 \]
\end{corollary}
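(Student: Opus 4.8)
The plan is to strip the stated identity of its inessential structure by an affine change of variable, turning it into a classical Rodrigues-type relation for the polynomials $(u^2-1)^n$, and then to settle that relation by a bare-hands Leibniz expansion.

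First I would substitute $u=2x+1$. Since $x(1+x)=\tfrac14(u^2-1)$, this gives $f_n(x)=4^{-n}(u^2-1)^n$, and because $\frac{d}{dx}=2\frac{d}{du}$ one has $f_n^{(j)}(x)=2^{\,j}4^{-n}\,\frac{d^{\,j}}{du^{\,j}}(u^2-1)^n$ together with $f_k(x)=4^{-k}(u^2-1)^k$. Inserting these into the claim, the powers of $2$ and $4$ on the two sides coincide (each side carries the factor $2^{-n-k}$), so after cancelling them the assertion becomes the purely polynomial identity
\[
(u^2-1)^k\,\frac{d^{\,n+k}}{du^{\,n+k}}(u^2-1)^n=\frac{(n+k)!}{(n-k)!}\,\frac{d^{\,n-k}}{du^{\,n-k}}(u^2-1)^n .
\]
Recognising $u=2x+1$ as the substitution that carries $x(1+x)$ to $\tfrac14(u^2-1)$ is the one genuinely creative step, consistent with the Legendre flavour of this section; everything afterwards is mechanical.

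Next I would expand both sides with the Leibniz rule applied to $(u^2-1)^n=(u-1)^n(u+1)^n$, using $\frac{d^{\,j}}{du^{\,j}}(u\pm1)^n=\frac{n!}{(n-j)!}(u\pm1)^{\,n-j}$. On the left the surviving terms are indexed by $j\in\{k,\dots,n\}$, and after multiplying by $(u^2-1)^k=(u-1)^k(u+1)^k$ each is a multiple of $(u-1)^{\,n-i}(u+1)^{\,i+k}$ with $i=j-k$; on the right the expansion is already supported on the same monomials $(u-1)^{\,n-i}(u+1)^{\,i+k}$ with $i\in\{0,\dots,n-k\}$. Matching the coefficient of $(u-1)^{\,n-i}(u+1)^{\,i+k}$ then reduces the whole identity to the single elementary equality
\[
\binom{n+k}{i+k}\frac{1}{(n-k-i)!\,i!}=\frac{(n+k)!}{(n-k)!}\binom{n-k}{i}\frac{1}{(n-i)!\,(i+k)!},
\]
both sides of which equal $\dfrac{(n+k)!}{i!\,(n-k-i)!\,(n-i)!\,(i+k)!}$. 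The only thing that needs care is the bookkeeping: tracking which Leibniz terms are nonzero (the ranges $k\le j\le n$ on the left and $0\le i\le n-k$ on the right) and reindexing the left sum by $i=j-k$ so that the two expansions live in the same monomial basis. Once that alignment is made the proof collapses to the displayed factorial identity, so I expect no real obstacle beyond this indexing.

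As an alternative tied to the preceding material, one can instead deduce the corollary directly from \eqref{eq:chv}. Writing $\Phi(b)=\int_0^1 x^n(1-x)^n/(x+b)\,{\mathrm d}x$ and using $\partial_b^{\,j}(x+b)^{-1}=(-1)^j j!\,(x+b)^{-(j+1)}$, identity \eqref{eq:chv} becomes $b^k(b+1)^k\,\Phi^{(n+k)}(b)=\frac{(n+k)!}{(n-k)!}\Phi^{(n-k)}(b)$; since $\Phi$ is the Cauchy transform of $x^n(1-x)^n$, recovering the integrand from its transform (via the jump across the cut $[-1,0]$, equivalently by matching all moments) converts this into the derivative identity for $f_n$ after the reflection $x\mapsto -x$. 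Here the crux is precisely this injectivity/recovery step, which is why I would prefer the self-contained computation above.
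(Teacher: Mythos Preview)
Your main proof is correct: the substitution $u=2x+1$ cleanly reduces the claim to the Rodrigues-type identity for $(u^2-1)^n$, and the Leibniz expansion in the basis $(u-1)^{n-i}(u+1)^{i+k}$ together with the factorial check is sound.

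Your route, however, is genuinely different from the paper's. The paper deduces the corollary \emph{from} the integral identity \eqref{eq:chv}, which is the point of placing it in this section: it is a \emph{consequence} of Theorem~\ref{th:main}. Concretely, the paper Taylor-expands $x^n(1-x)^n=(-1)^nf_n(-x)$ about $x=-b$, substitutes into both sides of \eqref{eq:chv}, and observes that the unique transcendental contribution $\log(b+1)-\log b$ arises on the left only from the term $m=n+k$ and on the right only from $m=n-k$; equating those coefficients gives $c_{n+k}=c_{n-k}/\bigl(b(b+1)\bigr)^k$, which is exactly the corollary. Your argument, by contrast, is self-contained and never touches the integral identity; it is essentially the classical associated-Legendre proof the paper itself alludes to in the remark after \eqref{eq:di}. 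What your approach buys is independence from the earlier material; what the paper's approach buys is that the corollary emerges as a genuine dividend of the main theorem. Your alternative paragraph at the end is closer in spirit to the paper's argument, but the paper sidesteps the Cauchy-transform injectivity issue entirely by the simpler device of separating the logarithmic part from the rational part.
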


\begin{proof}
 By Taylor expansion at $x=-b$ we have
 \[
  x^n(1-x)^n
  =
  (-1)^n f_n(-x)
  =
  \sum_{m=0}^{2n}
  c_m(x+b)^m
  \qquad\text{with}\quad
  c_m
  =
  (-1)^{n+m}\frac{f_n^{(m)}(b)}{m!}.
 \]
 If we substitute this in \eqref{eq:chv} a term  with $\log(b+1)-\log b$ appears 
in the LHS for $m=n+k$ (the rest are rationals) and in the integral of the RHS 
for $m=n-k$. Hence $c_{n+k}=c_{n-k}/\big(b(b+1)\big)^k$. 
\end{proof}

Essentially comparing coefficients one gets a triple binomial identity: 
\begin{corollary}\label{c3}
  For each $k$, $\ell$, $n$ nonnegative integers with $\ell,k\le n$
  \[
   \sum_{m=0}^k
   \binom{k}{m}
   \binom{n}{m+\ell}
   \binom{2m+2\ell}{k+n}
   =
   \binom{n}{\ell}
   \binom{2\ell}{n-k}.
  \]
\end{corollary}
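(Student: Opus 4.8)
The plan is to recast Corollary~\ref{c2} in the variable that turns $f_n$ into a power of $t^2-1$, and then to read off a single, carefully chosen Taylor coefficient.

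First I would apply the affine substitution $x=(t-1)/2$, under which $x(1+x)=(t^2-1)/4$ and $\frac{d}{dx}=2\frac{d}{dt}$. Since $f_n(x)=\bigl(x(1+x)\bigr)^n=4^{-n}(t^2-1)^n$, and hence $f_n^{(r)}(x)=2^r4^{-n}\frac{d^r}{dt^r}(t^2-1)^n$, the powers of $2$ and $4$ on the two sides of Corollary~\ref{c2} cancel exactly (both carry an overall factor $2^{-(n+k)}$), leaving the Rodrigues-type identity
\[
 (t^2-1)^k\,\frac{d^{\,n+k}}{dt^{\,n+k}}(t^2-1)^n
 =
 \frac{(n+k)!}{(n-k)!}\,\frac{d^{\,n-k}}{dt^{\,n-k}}(t^2-1)^n .
\]

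Next I would expand both sides in powers of $t$. From $(t^2-1)^n=\sum_{a}\binom{n}{a}(-1)^{n-a}t^{2a}$ one obtains $\frac{d^{r}}{dt^{r}}(t^2-1)^n=\sum_a\binom{n}{a}(-1)^{n-a}\frac{(2a)!}{(2a-r)!}\,t^{2a-r}$, and on the left one expands the extra factor $(t^2-1)^k$ as well. Rewriting the falling factorials through $\frac{(2a)!}{(2a-r)!}=\binom{2a}{r}\,r!$ converts every coefficient into a product of binomials. I would then compare the coefficient of $t^{\,k+2\ell-n}$ on both sides. This exponent is chosen so that on the right only the term with $a=\ell$ survives, giving coefficient $(-1)^{n-\ell}(n+k)!\,\binom{n}{\ell}\binom{2\ell}{n-k}$, while on the left the summation is pinned to $a=\ell+m$, with the factor $(t^2-1)^k$ contributing $\binom{k}{k-m}=\binom{k}{m}$, yielding $(-1)^{n-\ell}(n+k)!\sum_m\binom{k}{m}\binom{n}{m+\ell}\binom{2m+2\ell}{n+k}$. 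The sign $(-1)^{n-\ell}$ turns out to be the same on both sides, so cancelling the common factor $(-1)^{n-\ell}(n+k)!$ leaves exactly the asserted identity.

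The manipulations themselves are routine; the one genuinely delicate point, which I expect to be the main obstacle, is the choice of the exponent $t^{\,k+2\ell-n}$. Comparing coefficients directly in the variable $x$ (equivalently, in Corollary~\ref{c2} as written) produces a perfectly valid but \emph{different} binomial identity, and it is only after the affine change of variables---which blends the powers of $x$---that the coefficients organize themselves into the doubled binomials $\binom{2m+2\ell}{n+k}$ and $\binom{2\ell}{n-k}$. One must also watch the parity and the range of $a$: when $k+2\ell<n$ the chosen exponent is negative, but then $\binom{2\ell}{n-k}=0$ and both sides vanish, so this degenerate case is automatically consistent.
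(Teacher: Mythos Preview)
Your proof is correct and follows essentially the same route as the paper: apply the affine substitution $x\mapsto (t-1)/2$ to Corollary~\ref{c2} to obtain the Rodrigues-type identity~\eqref{eq:di}, expand both sides binomially, and compare the coefficient of $t^{k+2\ell-n}$ (in the paper's notation this is the comparison that, after the relabelling $\ell_1=k-m$, $\ell_2=m+\ell$, singles out the same term). Your explicit verification that the powers of~$2$ cancel, that the sign $(-1)^{n-\ell}$ is common to both sides, and that the degenerate case $k+2\ell<n$ is consistent are all accurate and make the argument slightly more self-contained than the paper's write-up.
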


\begin{remark}
This hypergeometric identity can be rewritten as
\[
_3 F_2 \Bigl(-k,\ell-n,\ell+\frac12;\,\ell-\frac{k+n}{2}+1,
\frac{1-n-k}{2}+\ell;\,1 \Bigr)
= \frac{\binom{2\ell}{n-k}}{\binom{2\ell}{n+k}},
\]
so it is a particular case of the Pfaff--Saalsch\"utz identity~\cite[\S2.3.1]{Slater}
for the evaluation at 1 of well-poised $_3F_2$'s.
Alternatively, it can be automatically obtained using Zeilberger's 
creative telescoping algorithm~\cite{PeWiZe}.
\end{remark}

\begin{proof}
In Corollary~\ref{c2} change the variable $x\mapsto (x-1)/2$ to obtain, 
clearing denominators,
 \begin{equation}\label{eq:di}
     (n-k)!(x^2-1)^k
   \frac{{\mathrm d}^{n+k}}{{\mathrm d}x^{n+k}}
   (x^2-1)^n
   =
   (n+k)!
   \frac{{\mathrm d}^{n-k}}{{\mathrm d}x^{n-k}}
   (x^2-1)^n.
 \end{equation}
We rewrite the left-hand side as
\[
  (n-k)!
  \sum_{\ell_1}
  (-1)^{k-\ell_1}
  \binom{k}{k-\ell_1}
  x^{2\ell_1}
  \cdot
  (n+k)!
  \sum_{\ell_2}
  (-1)^{n-\ell_2}
  \binom{n}{\ell_2}
  \binom{2\ell_2}{n+k}
  x^{2\ell_2-n-k},
 \]
and the right-hand side as
 \[
  (n+k)!(n-k)!
  \sum_{\ell}
  (-1)^{n-\ell}
  \binom{n}{\ell}
  \binom{2\ell}{n-k}
  x^{2\ell-n+k}.
\]
Comparing coefficients and renaming $\ell_1=k-m$, $\ell_2=m+\ell$ yields the 
result. 
\end{proof}

\begin{remark}
It turns out that~\eqref{eq:di} is a known identity expressing a symmetry
of the associated Legendre polynomials. This can be obtained from Rodrigues'
formula and from the general Legendre equation, see~\cite{westra} for
a simple elementary proof. The one obtained here (Corollary~\ref{c2}) is
competitively simple.
\end{remark}

The definition of the Legendre polynomials assures that they are eigenfunctions 
of the differential operator 
$\frac{{\mathrm d}}{{\mathrm d}x}(x^2-1)\frac{{\mathrm d}}{{\mathrm d}x}$. 
Although $\frac{{\mathrm d}}{{\mathrm d}x}$ and $(x^2-1)$ do not commute and 
apparently there is not a simple formula for the commutator of their powers, 
Legendre polynomials are also eigenfunctions of a simple operator composed by 
powers of these operators. 

\begin{corollary}\label{c4}
  Let $P_n$ be the $n$-th Legendre polynomial. Then for $0\le k\le n$
  \[
   L[P_n]
   =
  \frac{(n+k)!}{(n-k)!}P_n
  \qquad\text{where}\quad
  L
  =
  \frac{{\mathrm d}^k}{{\mathrm d}x^k}(x^2-1)^k\frac{{\mathrm d}^k}{{\mathrm d}x^k}.
  \]
\end{corollary}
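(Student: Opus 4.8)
The plan is to derive Corollary~\ref{c4} directly from the differential identity~\eqref{eq:di}, which we may assume proven. The Legendre polynomials satisfy Rodrigues' formula $P_n(x) = \frac{1}{2^n n!}\frac{{\mathrm d}^n}{{\mathrm d}x^n}(x^2-1)^n$, so the quantity $\frac{{\mathrm d}^{n-k}}{{\mathrm d}x^{n-k}}(x^2-1)^n$ appearing on the right-hand side of~\eqref{eq:di} is already very close to $P_n$, differing only by the number of derivatives applied. The key observation is that applying $\frac{{\mathrm d}^k}{{\mathrm d}x^k}$ to $\frac{{\mathrm d}^{n-k}}{{\mathrm d}x^{n-k}}(x^2-1)^n$ recovers $\frac{{\mathrm d}^n}{{\mathrm d}x^n}(x^2-1)^n$, hence $2^n n!\, P_n$.

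First I would start from~\eqref{eq:di} and apply the operator $\frac{{\mathrm d}^k}{{\mathrm d}x^k}$ to both sides. On the right-hand side this produces $(n+k)!\,\frac{{\mathrm d}^n}{{\mathrm d}x^n}(x^2-1)^n = (n+k)!\,2^n n!\,P_n$. On the left-hand side I obtain
\[
(n-k)!\,\frac{{\mathrm d}^k}{{\mathrm d}x^k}\Bigl[(x^2-1)^k\,\frac{{\mathrm d}^{n+k}}{{\mathrm d}x^{n+k}}(x^2-1)^n\Bigr].
\]
Next I would rewrite the innermost piece $\frac{{\mathrm d}^{n+k}}{{\mathrm d}x^{n+k}}(x^2-1)^n$ as $\frac{{\mathrm d}^k}{{\mathrm d}x^k}\bigl[\frac{{\mathrm d}^n}{{\mathrm d}x^n}(x^2-1)^n\bigr] = 2^n n!\,\frac{{\mathrm d}^k}{{\mathrm d}x^k}P_n$, again by Rodrigues' formula. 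Substituting this in, the left-hand side becomes
\[
(n-k)!\,2^n n!\,\frac{{\mathrm d}^k}{{\mathrm d}x^k}\Bigl[(x^2-1)^k\,\frac{{\mathrm d}^k}{{\mathrm d}x^k}P_n\Bigr]
=
(n-k)!\,2^n n!\,L[P_n].
\]
Equating the two sides and cancelling the common factor $2^n n!$ yields $(n-k)!\,L[P_n] = (n+k)!\,P_n$, which is precisely the claimed eigenvalue relation.

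The only point requiring care is the bookkeeping of how many derivatives land where: one must check that applying $\frac{{\mathrm d}^k}{{\mathrm d}x^k}$ to the left side of~\eqref{eq:di} does not disturb the factor $(x^2-1)^k$ in a way that breaks the pattern, and that the operator $L = \frac{{\mathrm d}^k}{{\mathrm d}x^k}(x^2-1)^k\frac{{\mathrm d}^k}{{\mathrm d}x^k}$ is read off correctly from the resulting expression. I do not expect a genuine obstacle here, since~\eqref{eq:di} has essentially packaged all the real work; the corollary is a formal consequence obtained by applying $\frac{{\mathrm d}^k}{{\mathrm d}x^k}$ and invoking Rodrigues' formula. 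The mild subtlety is simply recognizing that $\frac{{\mathrm d}^{n+k}}{{\mathrm d}x^{n+k}}(x^2-1)^n$ factors as $\frac{{\mathrm d}^k}{{\mathrm d}x^k}$ applied to a multiple of $P_n$, which is what exposes the operator $L$.
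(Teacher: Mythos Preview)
Your proof is correct and follows essentially the same approach as the paper: differentiate identity~\eqref{eq:di} $k$ times and invoke Rodrigues' formula to recognize $P_n$ on both sides. The paper's proof is just a one-line version of what you wrote out in detail.
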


\begin{proof}
 Recalling that $P_n$ is proportional to 
 $\frac{{\mathrm d}^{n}}{{\mathrm d}x^{n}}(x^2-1)^n$ (Rodrigues' formula) 
this follows taking the $k$-derivative of~\eqref{eq:di}. 
\end{proof}

Actually, one could prove Corollary~\ref{c2} from Corollary~\ref{c4} by 
repeated integration, and noting that both sides in Corollary~\ref{c2} are 
divisible by $x^k$.


\end{document}